\newcommand{\E}{\mathbb{E}}
    \newcommand{\Prb}{\mathbb{P}}
		\newcommand{\cG}{\mathcal{G}}
				\newcommand{\cA}{\mathcal{A}}
		\newcommand{\cH}{\mathcal{H}}
		\newcommand{\cI}{\mathcal{I}}
		\newcommand{\cL}{\mathcal{L}}
	\newcommand{\sR}{\mathbb{R}}
	\newcommand{\sN}{\mathbb{N}}
		\newcommand{\sS}{\mathbb{S}}
    \newcommand{\sZ}{\mathbb{Z}}
    \newcommand{\sC}{\mathcal{C}}
        \newcommand{\sA}{\mathcal{A}}
\theoremstyle{plain}
\newtheorem{thm}{Theorem}[section]
\newtheorem{lem}[thm]{Lemma}
\newlength{\separationtitre}
\author{ Rapha\"el Cerf\thanks{
DMA, Ecole Normale Sup\'erieure,
CNRS, PSL University, 75005 Paris.}
\thanks{LMO, Universit\'e Paris-Sud, CNRS, Universit\'e
Paris--Saclay, 91405 Orsay.}, Barbara Dembin\thanks{LPSM UMR 8001, Université Paris Diderot, Sorbonne Paris Cité, CNRS, F-75013 Paris.}\thanks{The author is laureate of the Séphora Berrebi Scholarship in Mathematics in 2019. The author would like to thank Association Séphora Berrebi for their support. }}
\date{}
\begin{document}
\newpage

 \selectlanguage{english}

\title{Vanishing of the anchored isoperimetric profile in bond percolation at $p_c$ 
}

\maketitle

\begin{abstract}
We consider the anchored isoperimetric profile of the 
infinite open cluster, defined for $p>p_c$, whose existence has been recently proved in \cite{DembinCheeger}. 
We extend adequately the definition for $p=p_c$, in finite boxes. We prove a partial result which implies that, if the limit defining the anchored isoperimetric profile at $p_c$ exists, it has to vanish.
\end{abstract}

\section{Introduction}

The most well--known open question in percolation theory is to prove that the percolation probability vanishes at $p_c$ in dimension three.
In fact, the interesting quantities associated to the model are very difficult to study at the critical point or in its vicinity. 
We study here a very modest intermediate question. We consider the anchored isoperimetric profile of the infinite open cluster, defined for $p>p_c$, whose existence has been recently proved in \cite{DembinCheeger}. We extend adequately the definition for $p=p_c$, in finite boxes. We prove a partial result which implies that, if the limit defining the anchored isoperimetric profile at $p_c$ exists, it has to vanish.
\vskip 0.5cm
\noindent
{\bf The Cheeger constant.}
For a graph $\cG$ with vertex set $V$ and edge set $E$, we define the edge boundary $\partial_\cG A$  of a subset $A$ of $V$ as  $$\partial_\cG A =\Big\{\,e=\langle x,y\rangle \in E:x\in A,y\notin A \,\Big\}\,. $$ 
We denote by $|B|$ the cardinal of the finite set $B$. The Cheeger constant of the graph $\cG$ is defined as
$$\varphi_\cG=\min\left\{\,\frac{|\partial_\cG A|}{|A|}\,: \, A\subset V, 0<|A|\leq \frac{|V|}{2}\,\right\}\,.$$
This constant was introduced by Cheeger in his thesis \cite{thesis:cheeger}  in order to obtain a lower bound for the smallest eigenvalue of the Laplacian. 
\vskip 0.5cm
\noindent
{\bf The 
anchored isoperimetric profile  $\varphi_n(p)$.}
Let $d\geq 2$. We consider an i.i.d. supercritical bond percolation on $\sZ^d$, every edge is open with a probability $p>p_c(d)$, where $p_c(d)$ denotes the critical parameter for this percolation. We know that there exists almost surely a unique infinite open cluster $\sC_\infty$ \cite{Grimmett99}. 
We say that $H$ is a valid subgraph of $\sC_\infty$
if $H$ is connected and $0\in H\subset \sC_\infty$.
We define the anchored isoperimetric profile  $\varphi_n(p)$ of $\sC_\infty$ as follows.
We condition
on the event $\{0\in\sC_\infty\}$ and we set
$$ \varphi_n(p)=\min\left\{\,\frac{|\partial_{\sC_\infty} H|}{|H|}: 
\text{ $H$ valid subgraph of $\sC_\infty$},  
\, 0<|H|\leq n^d\,\right\}
\,. $$
The following theorem 
from \cite{DembinCheeger} 
asserts the existence of the limit of $n\varphi_n(p)$ when $p>p_c(d)$. 
\begin{thm}\label{thmheart}
Let $d\geq 2$ and $p>p_c(d)$.  
	There exists a positive real number $\varphi(p)$ such that, conditionally on $\{0\in\sC_\infty\}$,
$$\lim_{n\rightarrow \infty} n\varphi_n(p)\,=\,\varphi(p)\text{ almost surely.}$$
\end{thm}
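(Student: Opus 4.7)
The plan is to follow the general Wulff-shape framework for supercritical Bernoulli percolation, adapted to the anchoring constraint $0\in H$. First I would introduce a deterministic surface tension $\beta_p\colon \sR^d\to[0,\infty)$, defined as the almost sure rescaled limit of minimal cut-sets separating the two flat faces of a tilted cylinder whose base is orthogonal to a unit vector $\nu$; for $p>p_c(d)$ the existence, continuity, positivity and positive homogeneity of $\beta_p$ are known from the Cerf--Th\'eret theory of max-flow/min-cut in supercritical percolation. The candidate limit is then given by the continuous variational problem
\[
\varphi(p)\;=\;\inf\left\{\int_{\partial^* F}\beta_p(\nu_F)\,d\cH^{d-1}\;:\; F\subset\sR^d\text{ of finite perimeter},\ 0\in\overline{F},\ \Leb(F)=\tfrac{1}{\theta(p)}\right\},
\]
where $\theta(p)=\Prb(0\in\sC_\infty)$; by the Wulff theorem the infimum is attained on any translate of the normalised Wulff crystal of $\beta_p$ that contains the origin, and a simple scaling argument shows that smaller volumes only make the ratio $|\partial H|/|H|$ larger, so the effective constraint $|H|\le n^d$ is saturated in the limit.

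For the upper bound $\limsup_n n\varphi_n(p)\le\varphi(p)$, I would test against the rescaled Wulff crystal $nW$ (with $W$ normalised by $\theta(p)\Leb(W)=1$), intersect it with $\sC_\infty$, and repair small defects by a mesoscopic block renormalisation exploiting the density of $\sC_\infty$ inside a generic block together with Antal--Pisztora chemical-distance bounds to splice the construction to the origin. This produces a valid subgraph $H_n$ with $|H_n|=(1+o(1))n^d$ and edge boundary $(1+o(1))\varphi(p)\,n^{d-1}$, yielding $n\varphi_n(p)\le\varphi(p)+o(1)$ almost surely.

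The lower bound $\liminf_n n\varphi_n(p)\ge\varphi(p)$ is the delicate part. Given an (almost) minimiser $H_n$, associate the rescaled continuous set $F_n=n^{-1}\bigcup_{x\in H_n}(x+[0,1]^d)$; by compactness in $\mathrm{BV}$, along a subsequence $F_n\to F_\infty$ in $L^1$ with $0\in\overline{F_\infty}$ and $\Leb(F_\infty)\le 1/\theta(p)$. The core step is the block-wise surface-tension inequality
\[
\liminf_n \frac{|\partial_{\sC_\infty}H_n|}{n^{d-1}}\;\ge\; \int_{\partial^*F_\infty}\beta_p(\nu_{F_\infty})\,d\cH^{d-1}\,,
\]
which I would obtain by covering the ambient box with mesoscopic cubes and, for each cube whose centre lies near $\partial^* F_\infty$ with approximate normal $\nu$, applying a concentration bound for the minimal cut orthogonal to $\nu$ inside the cube; summing and comparing with the definition of $\varphi(p)$ finishes the bound. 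The main obstacle is precisely this step: producing a uniform, shape-independent block-wise min-cut estimate that is tight enough to sum into the global surface-tension integral, controlling the interaction between good blocks and the sparse set of exceptional blocks where the cluster is atypical, and making sure that the anchoring $0\in H_n$ survives the limiting procedure so that $F_\infty$ is non-trivial.
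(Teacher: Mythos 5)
This paper does not prove Theorem~\ref{thmheart}: it is quoted verbatim as a result established in~\cite{DembinCheeger}, and the present article only uses its statement together with the representation $\varphi(p)=\cI_p(W_p)$. There is therefore no internal proof to compare your sketch against. That said, your outline is consistent with the approach taken in the cited work and with the material the paper does recall: the surface tension $\beta_p$ built from minimal cuts in cylinders (here attributed to Rossignol--Th\'eret for existence and continuity of the flow constant), the Wulff variational problem with the volume normalised to $1/\theta(p)$, an upper bound by planting a rescaled Wulff crystal inside $\sC_\infty$ and repairing it via a coarse-graining / renormalisation scheme, and a lower bound via $\mathrm{BV}$-compactness of the rescaled discrete sets followed by a block-wise surface-energy inequality. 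Two small remarks. First, the anchoring constraint $0\in\overline{F}$ in your variational problem is vacuous because the functional $F\mapsto\int_{\partial^*F}\beta_p(\nu_F)\,d\cH^{d-1}$ under a volume constraint is translation invariant; the paper's formulation simply fixes a representative dilate $W_p$ of the Wulff crystal with $\cL^d(W_p)=1/\theta(p)$. Second, you correctly flag that the block-wise lower bound is where all the technical work lies (controlling exceptional blocks, ensuring the limit set is nondegenerate, etc.); since that argument is not reproduced here, I cannot confirm whether your proposed route for that step matches the one in~\cite{DembinCheeger}, only that your overall plan is the standard and expected one.
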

\noindent
We wish to study how this limit behaves when $p$ is getting closer to $p_c$. To do so, we need to extend the definition of the anchored isoperimetric profile so that it is well defined at $p_c(d)$.
We say that $H$ is a valid subgraph of $\sC(0)$,
the open cluster of $0$, 
if $H$ is connected and $0\in H\subset \sC(0)$.
We define $\widehat{\varphi}_n(p)$ for every $p\in[0,1]$ as
$$\widehat{\varphi}_n(p) = 
        \min\left\{\,\frac{|\partial_{\sC(0)} H|}{|H|}:
\text{ $H$ valid subgraph of $\sC(0)$},  
\, 0<|H|\leq n^d\,\right\}
\,. $$
In particular, if $0$ is not connected to $\partial [-n/2,n/2]^d $ by a $p$-open path, then $|\sC(0)|<n^d$ and taking $H=\sC(0)$, we see that 
$\widehat{\varphi}_n(p)$ is equal to $0$.
Thanks to theorem \ref{thmheart}, we have 
$$\forall p>p_c\qquad \lim_{n\rightarrow \infty}n\widehat{\varphi}_n(p)\,=\,\theta(p)\delta_{\varphi(p)}+(1-\theta(p))\delta_0\,,$$
where $\theta(p)$ is the probability that $0$ belongs to an infinite open cluster.
The techniques of~\cite{DembinCheeger} to prove the existence of this limit rely on coarse--graining estimates which can be employed only
in the supercritical regime. Therefore we are not able so far to extend the above convergence at the critical point $p_c$.
Naturally, we expect that $n \widehat{\varphi}_n(p_c)$ converges towards $0$ as $n$ goes to infinity, unfortunately we are only able to prove a weaker statement.
\begin{thm}\label{heartp} With probability one, we have 
$$\liminf_{n\rightarrow \infty}\,n\widehat{\varphi}_n(p_c)\,=\,0\,.$$
\end{thm}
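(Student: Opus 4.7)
The plan is to split the sample space according to whether $\sC(0)$ is finite. On $\{|\sC(0)|<\infty\}$, for every $n$ large enough that $n^d\geq |\sC(0)|$, the whole cluster $H=\sC(0)$ is a valid subgraph in the definition of $\widehat{\varphi}_n(p_c)$: it is connected, contains $0$, is a subset of $\sC(0)$, and has size at most $n^d$. By definition $\partial_{\sC(0)}\sC(0)=\emptyset$, so $\widehat{\varphi}_n(p_c)=0$, and hence $\lim_n n\widehat{\varphi}_n(p_c)=0$ on this event. Since $\Prb(|\sC(0)|<\infty)=1-\theta(p_c)$ and $\theta(p_c)<1$ (because $p_c<1$ forces $\theta(p_c)\leq 1-(1-p_c)^{2d}<1$), this already delivers
$$\Prb\Big(\liminf_n n\widehat{\varphi}_n(p_c)=0\Big)\geq 1-\theta(p_c)>0,$$
and settles the theorem whenever $\theta(p_c)=0$ (known in $d=2$ and in high enough dimensions).

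For the remaining regime $\theta(p_c)>0$ one has to work on $\{0\in\sC_\infty\}$, where $\sC(0)=\sC_\infty$ is infinite. My natural candidate would be $H_R$, the open cluster of $0$ inside the box $B(0,R)=[-R,R]^d$: it is automatically a valid subgraph, and one checks that $\partial_{\sC(0)}H_R$ reduces to the open edges with inner endpoint in $H_R$ that exit $B(0,R)$, since an open edge from $H_R$ to a vertex inside the box would force that vertex into $H_R$. A natural pigeonhole over $R\in[N,2N]$ then uses that each boundary-crossing edge contributes to $|\partial H_R|$ for only one value of $R$, so that $\sum_{R=N}^{2N}|\partial H_R|$ is bounded by the number of open edges in the annulus $B(0,2N)\setminus B(0,N)$, while $|H_R|\geq|H_N|$.

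The main obstacle, and presumably the reason the authors can only reach $\liminf=0$ rather than the conjectural $\lim=0$, is that this crude averaging only yields $\min_R|\partial H_R|/|H_R|=O(1/N)$, hence $N\cdot\min=O(1)$ and not $o(1)$: it merely recovers the supercritical Wulff rate $1/n$. Turning this into a genuine $o(1/n)$ bound on infinitely many scales requires a truly critical input—either a non-box-shaped $H$ tracking the fractal geometry of the critical cluster, or a multiscale Borel--Cantelli argument using arm-type estimates at $p_c$. I would expect this to be the delicate step; the contrast with the clean finite-cluster case is precisely what makes the statement only a partial result.
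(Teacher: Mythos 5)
Your first paragraph is correct and does settle the theorem in the (conjecturally vacuous, but in low and very high dimension known) case $\theta(p_c)=0$: on $\{|\sC(0)|<\infty\}$ one has $\widehat{\varphi}_n(p_c)=0$ eventually, hence $n\widehat\varphi_n(p_c)\to 0$. The paper handles this case implicitly as well, since on the complementary event of the contradiction hypothesis one immediately gets $0\in\sC_\infty$.

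The genuine gap is exactly where you say it is: the case $\{0\in\sC_\infty\}$. Your pigeonhole over $R\in[N,2N]$ cannot work, and for the reason you identify --- it only produces a scale at which $|\partial H_R|/|H_R|=O(1/N)$, which is consistent with a \emph{positive} limit and gives no contradiction. You then stop, so the proposal does not prove the statement.

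The missing idea, and the one the paper actually uses, is to run the argument \emph{by contradiction and through a growth estimate}, not through an averaging bound. Suppose with positive probability $\inf_{k\ge n_0} k\widehat\varphi_k(p_c)>c$. Explore $\sC(0)$ in rounds $\sC_0\subset\sC_1\subset\cdots$, adding at each step all vertices reached by open boundary edges. The isoperimetric lower bound $|\partial^o\sC_l|\ge (c/n)|\sC_l|$ (valid while $|\sC_l|\le n^d$), combined with $|\sC_{l+1}|\ge|\sC_l|+|\partial^o\sC_l|/(2d)$, forces polynomial volume growth: one proves by induction that after $(n-n_0)k$ steps (for a fixed $k\approx 2^{d+1}d/c$) the explored cluster has size at least $\alpha n^d$. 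Restricting the exploration to the box $[-n,n]^d$, the same isoperimetric bound then gives $\Omega(n^{d-1})$ vertices of the explored cluster on the boundary of the box, hence on some face. This is where the critical input enters: by Barsky--Grimmett--Newman there is no percolation in a half-space at $p_c$, so the probability that $0$ reaches distance $n$ inside a half-space is $o(1)$, and a first-moment bound shows that with high probability each face of $[-n,n]^d$ carries $o(n^{d-1})$ vertices connected to $0$ inside the box. This contradicts the growth estimate. In short, the ingredient you were looking for is not a finer averaging over scales but the half-space result at criticality; it is the mechanism that turns a $d$-dimensional anchored isoperimetric inequality into an impossibility at $p_c$.
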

\noindent We shall
prove this theorem by contradiction. 
We first define an exploration process of the cluster of $0$ that remains inside the box $[-n,n]^d$. 
If the statement of the theorem does not hold, then the cluster of $0$ satisfies a $d$-dimensional anchored isoperimetric inequality. It follows that the number of sites that are revealed in the exploration of the cluster of $0$ will grow fast enough of order $n^{d-1}$.
Then, we can prove that the intersection of the cluster that we have explored with the boundary of the box $[-n,n]^d$ is of order $n^{d-1}$. Using the fact that there is no percolation in a half-space, we obtain a contradiction.
Before starting the precise proof, we
recall some results from \cite{DembinCheeger} on the meaning of the limiting value $\varphi(p)$. 

\vskip 0.5cm
\noindent
{\bf The Wulff theorem.} 
We denote by $\cL ^d$ the $d$-dimensional Lebesgue measure 
and by
$\cH ^{d-1}$ denotes the $(d-1)$--Hausdorff measure in dimension $d$.
Given a norm $\tau$ on $\sR^d$ and a subset $E$ of $\sR^d$ having a regular enough boundary, 
we define $\cI_\tau(E)$,
the surface tension of $E$ for the norm $\tau$, 
as 
$$\cI_\tau (E)=\int_{\partial E}\tau(n_E(x))\cH^{d-1}(dx)\,.$$
We consider
the anisotropic isoperimetric problem
associated with the norm $\tau$:
\begin{align}\label{isopb}
\text{minimize $\frac{\cI_\tau(E)}{\cL^d(E)}$ subject to $\cL^d(E)\leq 1$}\,.
\end{align}
The famous Wulff construction provides a minimizer for this anisotropic isoperimetric problem. We define the set $\widehat{W}_\tau$ as
$$\widehat{W}_\tau=\bigcap_{v\in\sS^{d-1}}\left\{x\in\sR^d:\, x\cdot v\leq \tau(v)\right\}\,,$$
where $\cdot$ denotes the standard scalar product and $\sS^{d-1}$ is the unit sphere of $\sR^d$.
Up to translation and Lebesgue negligible sets, the set
$$\frac{1}{\cL^d(\widehat{W}_\tau)^{1/d}}\widehat{W}_\tau$$ is the unique solution to the problem~\eqref{isopb}.
\vskip 0.5cm
\noindent
{\bf Representation of $\varphi(p)$.}
In \cite{DembinCheeger}, we build an appropriate norm $\beta_p$ for our problem that is directly related to the open edge boundary. We define the Wulff crystal $W_p$ as the dilate of $\widehat{W}_{\beta_p}$ such that $\cL^d(W_p)=1/\theta(p)$, where $\theta(p)=\Prb(0\in\sC_\infty)$. We denote by $\cI_p$ the surface tension associated with the norm $\beta_p$.  In \cite{DembinCheeger}, we prove that
$$\forall p>p_c(d)\qquad\varphi(p)=\cI_p(W_p)\,.$$

\section{Proofs}
We prove next the following lemma, which is based on two important results due to Zhang \cite{Zhang} and Rossignol and Théret \cite{flowconstant}. To alleviate the notation, the critical point $p_c(d)$ is denoted simply by $p_c$.
\begin{lem} 
	We have
	$$\lim_{\substack{p\rightarrow p_c\\p>p_c}}
	\,\,
	\Big(
	\theta(p)\delta_{\cI_p(W_p)}+(1-\theta(p))\delta_0\Big)
	\,=\,
	\delta_0\,.$$
\end{lem}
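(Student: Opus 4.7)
The plan is to exploit the representation $\varphi(p)=\cI_p(W_p)$ recalled above together with the classical cone identity for the Wulff crystal: for any norm $\tau$ on $\sR^d$,
$$\cI_\tau(\widehat{W}_\tau)\,=\,d\,\cL^d(\widehat{W}_\tau)\,,$$
which follows from the support-function relation $\tau(n_{\widehat{W}_\tau}(x))=x\cdot n_{\widehat{W}_\tau}(x)$ valid for $\cH^{d-1}$-a.e.\ $x\in\partial\widehat{W}_\tau$, combined with the divergence theorem applied to $x\mapsto x$. Since $W_p$ is the homothet of $\widehat{W}_{\beta_p}$ of ratio $\lambda_p=(\theta(p)\cL^d(\widehat{W}_{\beta_p}))^{-1/d}$ chosen so that $\cL^d(W_p)=1/\theta(p)$, and since $\cI_p$ is $(d-1)$-homogeneous under dilation, we obtain
$$\cI_p(W_p)\,=\,d\,\theta(p)^{-(d-1)/d}\,\cL^d(\widehat{W}_{\beta_p})^{1/d}\,,\qquad\text{i.e.}\qquad \theta(p)^{d-1}\,\cI_p(W_p)^d\,=\,d^d\,\cL^d(\widehat{W}_{\beta_p})\,.$$

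It then suffices to show that $\cL^d(\widehat{W}_{\beta_p})\to 0$ as $p\downarrow p_c$. By the symmetry $\beta_p(-v)=\beta_p(v)$ and the definition of $\widehat{W}_{\beta_p}$, the Wulff crystal lies inside the box $\prod_{i=1}^{d}[-\beta_p(e_i),\beta_p(e_i)]$, so it is enough to check that $\beta_p(v)\to 0$ along each axis direction. This is precisely where the two cited theorems enter: the norm $\beta_p(v)$ is, via max-flow/min-cut duality, the flow constant $\nu(p,v)$ of Bernoulli percolation; Zhang's theorem \cite{Zhang} asserts that $\nu(p_c,v)=0$, while the right-continuity of $p\mapsto\nu(p,v)$ on $[0,1]$ proved by Rossignol and Th\'eret \cite{flowconstant} then forces $\nu(p,v)\downarrow 0$ as $p\downarrow p_c$.

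To conclude, I translate $\theta(p)^{d-1}\cI_p(W_p)^d\to 0$ into the required weak convergence. For any $\ep>0$, on the event $\{\cI_p(W_p)\ge \ep\}$ one has
$$\theta(p)^{d-1}\,\le\,\ep^{-d}\,\theta(p)^{d-1}\,\cI_p(W_p)^d\,=\,(d/\ep)^d\,\cL^d(\widehat{W}_{\beta_p})\,\longrightarrow\,0\,,$$
whence $\theta(p)\,\ind_{\cI_p(W_p)\ge \ep}\to 0$, which is exactly the Portmanteau criterion for the convergence $\theta(p)\delta_{\cI_p(W_p)}+(1-\theta(p))\delta_0\Rightarrow\delta_0$.

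The main obstacle I anticipate lies in the second step: one must carefully identify the norm $\beta_p$ from \cite{DembinCheeger} with the flow constant appearing in \cite{Zhang} and \cite{flowconstant}, so that the two theorems apply verbatim. Both quantities are defined as limits of minimal edge-boundary/cut densities on large flat cylinders and coincide under max-flow/min-cut duality, but matching the precise normalizations (and handling the fact that $\beta_p$ is \emph{a priori} only defined above $p_c$) requires a short bookkeeping argument before the two black-box ingredients can be invoked.
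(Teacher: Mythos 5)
Your proof is correct, but it takes a genuinely different route from the paper's. The paper first observes that, by monotonicity of $\theta$, the limit $\lim_{p\downarrow p_c}\theta(p)$ exists; if it is $0$ the claim is immediate, and otherwise it equals some $\delta>0$, in which case $\cL^d(W_p)=1/\theta(p)\le 1/\delta$ uniformly in $p$, so comparing $W_p$ with a \emph{fixed} reference set $B$ of volume $1/\delta$ and using the isoperimetric optimality of the dilated Wulff shape gives $\cI_p(W_p)\le\cI_p(B)\to 0$. You avoid this dichotomy altogether: using the cone identity $\cI_\tau(\widehat W_\tau)=d\,\cL^d(\widehat W_\tau)$ and the $(d-1)$-homogeneity of $\cI_p$ under dilation, you extract the explicit scaling relation $\theta(p)^{d-1}\cI_p(W_p)^d=d^d\,\cL^d(\widehat W_{\beta_p})$, which tends to $0$ since $\widehat W_{\beta_p}\subset\prod_i[-\beta_p(e_i),\beta_p(e_i)]$ shrinks to a point, and then a direct Portmanteau argument handles both regimes at once. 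Your version is more quantitative and uniform, at the cost of invoking the cone identity and a divergence-theorem computation; the paper's is more elementary, needing only the qualitative optimality of the Wulff shape, at the cost of a case split. Both reduce to the same two black boxes (Zhang's $\beta_{p_c}=0$ and Rossignol--Th\'eret's continuity), and the ``main obstacle'' you flag (matching $\beta_p$ with the flow constant) is glossed over in the paper as well, so it is not a gap specific to your approach.
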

\begin{proof} 
If $\lim_{p\rightarrow p_c}\theta(p)=0$, then the result is clear. 
	Otherwise, let us assume that 
	$$\lim_{\substack{p\rightarrow p_c\\p>p_c}}\theta(p)=\delta>0\,.$$ Let $B$ be a subset of $\sR^d$ having a regular boundary and such that $\cL^d(B)=1/\delta$. As the map $p\mapsto \theta(p)$ is non-decreasing and $\cL^d(W_p)=1/\theta(p)$, we have $$\forall p>p_c\qquad\cL^d(W_p)\leq \cL^d(B)\,.$$ Moreover as $W_p$ is the dilate of the minimizer associated to the isoperimetric problem \eqref{isopb}, we have
$$\forall p>p_c\qquad\cI_p(W_p)\leq\cI_p(B)\,.$$
In \cite{Zhang}, Zhang proved that $\beta_{p_c}=0$. In \cite{flowconstant}, Rossignol and Théret proved the continuity of the flow constant. Combining these two results, we get that 
	$$\lim_{\substack{p\rightarrow p_c\\p>p_c}}\,
	\beta_p\,=\,\beta_ {p_c}\,=\,0\quad 
	\text{and so}\quad
	\lim_{\substack{p\rightarrow p_c\\p>p_c}}\,
	\cI_p(B)\,=\,0\,.$$
Finally, we obtain
	$$\lim_{\substack{p\rightarrow p_c\\p>p_c}}\,
	\cI_p(W_p)\,=\,0\,.$$
This yields the result.
\end{proof}

\begin{proof}[Proof of theorem \ref{heartp}]
We assume by contradiction that $$\Prb\left(\liminf_{n\rightarrow \infty}\,\,n\widehat{\varphi}_n(p_c)=0\right)<1\,.$$ Therefore there exist positive constants $c$ and $\delta$ such that
\begin{align}\label{refeq1}
\Prb\left(\liminf_{n\rightarrow \infty}\,n\widehat{\varphi}_n(p_c)>c\right)=\lim_{n\rightarrow\infty}\Prb\left(\inf_{k\geq n}\,k\widehat{\varphi}_k(p_c)>c\right)=\delta\,.
\end{align}
Therefore, there exists a positive integer $n_0$ such that
\begin{align}\label{refeq2}
\Prb\left(\inf_{k\geq n_0}\,\,k\widehat{\varphi}_k(p_c)>c\right)\geq\frac{\delta}{2}\,.
\end{align}
In what follows, we condition on the event 
	$$\Big\{\inf_{k\geq n_0}\,k\widehat{\varphi}_k(p_c)>c\,\Big\}\,.$$
Note that on this event, $0$ is connected to infinity by a $p_c$-open path. 
For $H$ a subgraph of $\sZ^d$,
we define
$$\partial ^o H=\Big\{\,e\in\partial H,\text{ $e$ is open}\,\Big\}\,.$$
Note that if $H\subset \sC_\infty$, then 
$\partial_{\sC_\infty} H=\partial^o H$. 
Moreover, if $H$ is equal to 
$\sC(0)$,
the open cluster of $0$, then
$\partial_{\sC(0)} H=\partial^o H=\varnothing$. 
We define next an exploration process of the cluster of $0$.
We set $\sC_0=\{0\}$, $\cA_0=\emptyset$. Let us assume that
	$\sC_0,\dots,\sC_l$ and $\sA_0,\dots,\sA_l$ are already constructed.  We define 
$$\sA_{l+1}=\left\{x\in\sZ^d\,:\,\exists y \in\sC_l\quad \langle x,y\rangle\in\partial^o \sC_l\right\}\,$$
and $$\sC_{l+1}=\sC_l\cup\sA_{l+1}\,.$$
We have
$$\partial ^o\sC_l\subset \{\langle x,y\rangle \in \E ^d :\, x\in \sA_{l+1}\}$$ so that $|\partial ^o\sC_l|\leq 2d |\sA_ {l+1}|$. Since $\sA_{l+1}$ and $\sC_l$ are disjoint, we have
\begin{align}\label{iter}
|\sC_{l+1}|=|\sC_l|+|\sA_ {l+1}|\geq |\sC_l|+\frac{|\partial ^o\sC_l|}{2d}\,.
\end{align}
Let us set $\alpha=1/n_0^d$ so that $|\sC_0|=\alpha n_0^d$. Let $k$ be the smallest integer greater than $2^{d+1}d/c$. We recall that $c$ and $n_0$ were defined in \eqref{refeq1} and \eqref{refeq2}.
	Let us prove by induction on $n$ that 
\begin{equation}\label{ire}
\forall n\geq n_0\qquad|\sC_{(n-n_0)k}|\geq \alpha n^d\,.
\end{equation}
This is true for $n=n_0$. Let us assume that this inequality is true for some integer $n\geq n_0$. 
	If $|\sC_{(n+1-n_0)k}|\geq n^d$, then we are done. 
	Suppose that $|\sC_{(n+1-n_0)k}|< n^d$. 
	In this case, 
	for any integer $l\leq k$, we have also
$|\sC_{(n-n_0)k+l}|<n^d$, and since
	$\sC_{(n-n_0)k+l}$ is a valid subgraph of $\sC(0)$ and
$\widehat{\varphi}_{n}(p_c)>c/n$, we conclude that
	$$\frac{|\partial ^o \sC_{(n-n_0)k+l}|}{|\sC_{(n-n_0)k+l}|}\geq \frac{c}{n}$$
and so $|\partial ^o \sC_{(n-n_0)k+l}|\geq \alpha c n ^{d-1}$.
Thanks to inequality \eqref{iter} applied $k$ times, we have 
$$|\sC_{(n+1-n_0)k}|\geq \alpha \left(n^d+\frac{ck}{2d}n^{d-1}\right)\,.$$
As $k \geq 2^{d+1}d/c$, we get
$$|\sC_{(n+1-n_0)k}|\geq \alpha (n^d+2^dn^{d-1})\geq \alpha (n+1) ^{d}\,.$$
This concludes the induction.

Let $\eta>0$ be a constant that we will choose later. In \cite{BarskyGrimmettNewman}, Barsky, Grimmett and Newman proved that there is no percolation in a half-space at criticality.
An important consequence of the result of Grimmett and Marstrand \cite{GrimmettMarstrand} is that the critical value for bond percolation in a half-space equals to the critical parameter $p_c(d)$ of bond percolation in the whole space, \textit{i.e.}, we have 
$$\Prb(0\text{ is connected to infinity by a $p_c$-open path in $\sN\times\sZ^{d-1}$})=0\,,$$
so that for $n$ large enough,
$$\Prb(\text{$\exists\gamma$ a $p_c$-open path starting from $0$ in $\sN\times\sZ^{d-1}$ such that $|\gamma|\geq n$})\leq \eta\,.$$
In what follows, we will consider an integer $n$ such that the above inequality holds. By construction the set $\sC_{n}$ is inside the box $[-n,n]^d$. Starting from this cluster, we are going to resume our exploration but with the constraint that we do not explore anything outside the box $[-n,n]^d$.
We set $\sC'_0=\sC_{n}$ and $\cA'_0=\emptyset$. Let us assume $\sC'_0,\dots,\sC'_l$ and $\sA'_0,\dots,\sA'_l$ are already constructed.  We define 
$$\sA'_{l+1}=\big\{\,x\in[-n,n]^d\,:\,\exists y \in\sC'_l\quad \langle x,y\rangle\in\partial^o \sC'_l\,\big\}\,$$
and $$\sC'_{l+1}=\sC'_l\cup\sA'_{l+1}\,.$$
We stop the process when $\sA'_{l+1}=\emptyset$. As the number of vertices in the box $[-n,n]^d$ is finite, this process of exploration will eventually stop for some integer~$l$. We have that $|\sC'_l|\leq n^d$ and $n\hat{\varphi}_k(p_c)>c$ so that
$$|\partial ^o \sC'_l|\,\geq\, \frac{c}{n}|\sC'_l|\,\geq\, \frac{c}{n}|\sC_n|\,.$$
	Moreover, for $n\geq  k n_0$, we have, thanks to inequality~\eqref{ire},
$$|\sC_n|\,\geq\,
\big|\sC_{\lfloor\frac{n}{k}\rfloor k}\big|\,\geq\,
\big|\sC_{(\lfloor\frac{n}{k}\rfloor -n_0)k}\big|\,\geq\,
\alpha\Big(\Big\lfloor\frac{n}{k}\Big\rfloor \Big)^d
\,.$$
We suppose that $n$ is large enough so that 
	$n\geq  k n_0$ and
$\lfloor\frac{n}{k}\rfloor\geq  n/2k$. 
Combining the two previous display inequalities, we conclude that
$$|\partial ^o \sC'_l|\,\geq\,\frac{c\alpha
}{2^dk ^d  }
n ^{d-1}
\,.$$
Therefore, for $n$ large enough,
there exists one face of $[-n,n]^d$ such that
there are at least $c\alpha n ^{d-1}/(2^dk ^d2d )$ vertices that are connected to $0$ by a $p_c$-open path that remains inside the box $[-n,n]^d$ and so
\begin{align}\label{eqabs}
\Prb\left(\begin{array}{c}
 \text{ there exists one face of $[-n,n]^d$ with
at least}\\\text{ $c\alpha n ^{d-1}/(2^dk ^d 2d)$ vertices that are connected to $0$ by a }\\\text{$p_c$-open path that remains inside the box $[-n,n]^d$}\end{array}\right)\geq \frac{\delta}{2}\,.
 \end{align}
Let us denote by $X_n$ the number of vertices in the face $\{-n\}\times [-n,n]^{d-1}$ that are connected to $0$ by a $p_c$-open path inside the box $[-n,n]^d$. We have
\begin{align}\label{eq1} 
 \E(X_n)&\leq \left|(\{-n\}\times [-n,n]^{d-1})\cap\sZ ^d \right|\,\Prb\left(\begin{array}{c}\text{$\exists\gamma$ a $p_c$-open path starting}\\\text{ from $0$ in $\sN\times\sZ^{d-1}$ such that}\\ |\gamma|\geq n\end{array}\right)\nonumber\\
 &\leq  (2n+1)^{d-1}\eta\,.
\end{align} 
Moreover, we have
\begin{align} \label{eq2}
 \E(X_n)\,\geq\,
	\frac{c\alpha}{2d2^dk ^d  }n ^{d-1}\,\, \Prb\left(X_n>\frac{c\alpha}{2d2^dk ^d  }n ^{d-1}\right)\,.
 \end{align}
Finally, combining inequalities \eqref{eq1} and \eqref{eq2}, we get
$$\Prb\left(X_n>\frac{c\alpha}{2d2^dk ^d  }n ^{d-1}\right)\leq \frac{2d\eta 3^{d-1}2^dk^d}{c\alpha}\,.$$
Therefore, we can choose $\eta$ small enough such that 
$$\Prb\left(X_n>\frac{c\alpha}{2d2^dk ^d  }n ^{d-1}\right)\leq \frac{\delta}{10d}$$
and so using the symmetry of the lattice
\begin{align*}
\Prb&\left(\begin{array}{c}
 \text{ there exists one face of $[-n,n]^d$ such there are at least}\\\text{ $c\alpha n ^{d-1}/(2^dk ^d 2d)$ vertices that are connected to $0$ by a $p_c$-open}\\\text{ path that remains inside the box $[-n,n]^d$}\end{array}\right)\nonumber\\
 &\leq 2d\,\Prb\left(X_n>\frac{c\alpha}{2d2^dk ^d  }n ^{d-1}\right)\leq\frac{\delta}{5}\,.
 \end{align*}
 This contradicts inequality \eqref{eqabs} and yields the result.
\end{proof}

\end{document}